\documentclass[a4paper,12pt,reqno]{amsart}
\usepackage{amssymb}
\usepackage{amsmath}
\usepackage{ifthen}
\usepackage{enumitem}
\usepackage{graphicx}
\usepackage[a4paper,margin=0.85in]{geometry}
\nonstopmode \numberwithin{equation}{section}
\usepackage{amssymb}
\usepackage{ifthen}

\usepackage{amsmath}
\usepackage[T1]{fontenc} 
\usepackage{comment}

\theoremstyle{plain}
\newtheorem{thm}{Theorem}
\numberwithin{thm}{section}
\newtheorem{cor}{Corollary}
\numberwithin{cor}{section}
\newtheorem{lem}{Lemma}
\numberwithin{lem}{section}
\newtheorem{prop}{Proposition}

\newtheorem{conj}{Conjecture}

\theoremstyle{definition}
\newtheorem{defn}{Definition}[section]

\newtheorem{prob}{Problem}
\newtheorem{rem}{Remark}[section]

\newcounter{minutes}
\divide\time by 60
\newcounter{hours}
\multiply\time by 60
\addtocounter{minutes}{-\time}

\newcounter {own}
\def\theown {\thesection       .\arabic{own}}

\newenvironment{pf}[1][]{%
	\vskip 3mm
	\noindent
	\ifthenelse{\equal{#1}{}}%
	{{\slshape Proof. }}%
	{{\slshape #1.} }%
}%
{\qed\bigskip}

\theoremstyle{plain}
\newtheorem{Thm}{Theorem}

\newtheorem{Lem}{Lemma}

\numberwithin{equation}{section}
\def\be{\begin{equation}}
	\def\ee{\end{equation}}

\newcommand{\bee}{\begin{enumerate}}
	\newcommand{\eee}{\end{enumerate}}

\newcommand{\blem}{\begin{lem}}
	\newcommand{\elem}{\end{lem}}
\newcommand{\bthm}{\begin{thm}}
	\newcommand{\ethm}{\end{thm}}
\newcommand{\bcor}{\begin{cor}}
	\newcommand{\ecor}{\end{cor}}
\newcommand{\beg}{\begin{examp}}
	\newcommand{\eeg}{\end{examp}}
\newcommand{\begs}{\begin{examples}}
	\newcommand{\eegs}{\end{examples}}
\allowdisplaybreaks
\newcommand{\bdefn}{\begin{defn}}
	\newcommand{\edefn}{\end{defn}}

\newcommand{\bprob}{\begin{prob}}
	\newcommand{\eprob}{\end{prob}}
\newcommand{\bei}{\begin{itemize}}
	\newcommand{\eei}{\end{itemize}}

\newcommand{\bcon}{\begin{conj}}
	\newcommand{\econ}{\end{conj}}
\newcommand{\bcons}{\begin{conjs}}
	\newcommand{\econs}{\end{conjs}}
\newcommand{\bprop}{\begin{prop}}
	\newcommand{\eprop}{\end{prop}}
\newcommand{\br}{\begin{rem}}
	\newcommand{\er}{\end{rem}}
\newcommand{\brs}{\begin{rems}}
	\newcommand{\ers}{\end{rems}}
\newcommand{\bo}{\begin{obser}}
	\newcommand{\eo}{\end{obser}}
\newcommand{\bos}{\begin{obsers}}
	\newcommand{\eos}{\end{obsers}}
\newcommand{\bpf}{\begin{pf}}
	\newcommand{\epf}{\end{pf}}
\newcommand{\ba}{\begin{array}}
	\newcommand{\ea}{\end{array}}
\newcommand{\beq}{\begin{eqnarray}}
	\newcommand{\beqq}{\begin{eqnarray*}}
		\newcommand{\eeq}{\end{eqnarray}}
	\newcommand{\eeqq}{\end{eqnarray*}}

\begin{document}
		\title{Bloch and Landau Type Theorems for Harmonic Mappings with Inhomogeneous Analytic Dilatation}

\author{Vasudevarao Allu}
\address{Vasudevarao Allu,
    Department of Mathematics,
	School of Basic Science,
	Indian Institute of Technology Bhubaneswar,
	Bhubaneswar-752050, Odisha, India.}
\email{avrao@iitbbs.ac.in}
	
	\author{Rohit Kumar}
	\address{Rohit Kumar,
		Department of Mathematics,
		Indraprastha Institute of Information Technology, Delhi,
		New Delhi-110020, India.}
	\email{rohitk12798@gmail.com}

	\subjclass[{AMS} Subject Classification:]{Primary 30C62, 30C25, 31A05}
	\keywords{ Bloch theorem, Landau theorem, Harmonic mappings, analytic dilatation }
	
	\maketitle
	
	\begin{abstract}
		We study Bloch and Landau type theorems for a class of sense-preserving harmonic mappings
		$f=h+\overline{g}$ in the unit disk $\mathbb{D}$ satisfying the inhomogeneous analytic dilatation equation
		\[
		g'(z)=\omega(z)h'(z)+\psi(z),
		\]
		where $\omega$ and $\psi$ are analytic functions in $\mathbb{D}$ with $\|\omega\|_{\infty}\leq k<1$ and
		$\|\psi\|_{\infty}\leq M$. Here $h$ and $g$ are called analytic and co-analytic part of $f$, respectively. We first establish a Bloch type theorem for certain normalized class of harmonic functions under the condition $k+M<1$. Finally, we obtain two versions of the Landau theorem under additional assumptions: one for bounded harmonic mappings and another under the assumption that the analytic part of a harmonic functions has bounded Bloch seminorm. 
	\end{abstract}
	
\section{Introduction and Preliminaries}
The classical theorems of Bloch and Landau are among the most fundamental results in geometric function theory. They describe how the normalization of a holomorphic function controls its local injectivity and the size of its image. Let $\mathbb{D} = \{z \in \mathbb{C} : \vert{}z\vert{} < 1\}$ denote the unit disk in the complex plane $\mathbb{C}$.  A disk $D$ is said to be a schlicht disk if there exists a region $\Omega$ in the unit disk $\mathbb{D}$ on which $f$ is univalent on $\Omega$ and $f(\Omega)=D$.   The following foundational result was given by Bloch.
     
     \vspace{2mm}
     \begin{Thm}[\textbf{Bloch Theorem}]\cite{Bloch-1925}
     	Let $f$ be a holomorphic function on $\overline{\mathbb{D}}=\{z:|z|\leq 1\}$ with $|f'(0)|=1$. Then there exists a positive constant $b$ such that $f(\mathbb{D})$ contains a schlicht disk of radius $b$.
     \end{Thm}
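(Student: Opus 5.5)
The plan is the classical normalization-and-compactness argument: reduce to a function whose derivative is controlled on a smaller disk, then use a quantitative univalence lemma to locate a schlicht disk of definite size. The main obstacle is the absence of any a priori bound on $f'$.

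\emph{Normalization.} Since $f$ is holomorphic on $\overline{\mathbb{D}}$, the function $\phi(r)=(1-r)\max_{|z|=r}|f'(z)|$ is continuous on $[0,1]$. Moreover $\phi(0)=|f'(0)|=1$ and $\phi(1)=0$. Let $r_0$ be the largest $r$ with $\phi(r)=1$, so $r_0<1$, and choose $a$ with $|a|=r_0$ and $|f'(a)|=1/(1-r_0)$. Put $\rho=(1-r_0)/2$.
\begin{itemize}
\item On $|z-a|<\rho$ we have $|z|<(1+r_0)/2=:r_1$.
\item Since $r_1>r_0$, maximality gives $\phi(r_1)<1$, hence $|f'(z)|<1/(1-r_1)=2/(1-r_0)$ on that disk.
\end{itemize}

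\emph{Rescaling.} Define $F(\zeta)=f(a+\rho\zeta)$ on $\mathbb{D}$. Then
\[
|F'(0)|=\rho|f'(a)|=\tfrac12, \qquad |F'(\zeta)|\le \rho\cdot\frac{2}{1-r_0}=1 \quad (\zeta\in\mathbb{D}).
\]
So $F$ has derivative bounded by $1$ on the unit disk and $|F'(0)|=1/2$.

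\emph{Schlicht disk for $F$.} Apply the Landau-type lemma: if $|F'|\le 1$ on $\mathbb{D}$ and $|F'(0)|=\lambda>0$, then $F$ is univalent on a disk $|\zeta|<r(\lambda)$ whose image contains a disk about $F(0)$ of radius $R(\lambda)>0$. I would prove this directly in three steps.
\begin{enumerate}
\item By Schwarz--Pick applied to $F'$, or by Cauchy estimates on $F''$, one has $|F'(\zeta)-F'(0)|\le C|\zeta|$ for $|\zeta|\le 1/2$.
\item Hence for $|\zeta_1|,|\zeta_2|<r:=\lambda/(2C)$,
\[
|F(\zeta_1)-F(\zeta_2)|\ge |\zeta_1-\zeta_2|\bigl(\lambda-Cr\bigr)>0 ,
\]
so $F$ is univalent on $|\zeta|<r$.
\item On $|\zeta|=r$ we get $|F(\zeta)-F(0)|\ge (\lambda-Cr)r$. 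By the argument principle, or simply because the image of a Jordan curve under an injective map bounds the image region, $F(\{|\zeta|<r\})$ contains the disk centered at $F(0)$ of radius $(\lambda-Cr)r=\lambda^2/(4C)$.
\end{enumerate}

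\emph{Conclusion.} Transporting back, the disk $\{z:|z-a|<\rho r\}\subset\mathbb{D}$ is mapped univalently by $f$ onto a region containing a disk of radius $b=\lambda^2/(4C)$ with $\lambda=1/2$. This constant is independent of $f$, which gives the theorem.

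The step I expect to be delicate is the normalization. The whole argument hinges on choosing the point $a$ via the maximality of $r_0$, and this choice is what produces uniform derivative bounds on the rescaled disk. Once that is in place, the univalence estimate is routine.
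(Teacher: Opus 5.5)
Your proof is correct; two steps are just compressed. Maximality of $r_0$ alone only gives $\phi(r_1)\neq 1$. To get $\phi(r_1)<1$ you also need continuity of $\phi$ and $\phi(1)=0$, via the intermediate value theorem. The bound $|f'(z)|<1/(1-r_1)$ on the whole disk $|z|<r_1$, rather than just on $|z|=r_1$, comes from the maximum modulus principle. With Cauchy estimates you get $C=2$ and $b=1/32$; with Schwarz--Pick (note $|F'|<1$) you get $C=4/3$ and $b=3/64$.

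The paper does not prove this statement; it is quoted from Bloch. Its proof of Theorem 3.1, specialized to $g\equiv 0$, does give a proof, and the second half matches yours: a Cauchy estimate for the second derivative, the linear-approximation univalence estimate of Lemma~\ref{Lemma-1}, and the boundary-curve argument of Lemma~\ref{Lemma-2}. The two routes differ in the normalization.
\begin{itemize}
\item The paper fixes $r<1$ and maximizes the M\"obius-invariant weight $(r^2-|z|^2)|h'(z)|$ on $\overline{\mathbb{D}}_r$. It then precomposes with a conformal map $T$ of $\mathbb{D}$ onto $\mathbb{D}_r$ sending $0$ to the maximizer. The rescaled map satisfies $|H'(0)|=1$ and $(1-|\zeta|^2)|H'(\zeta)|\le 1$, so it has uniform derivative bounds only on $|\zeta|\le 1/2$, and the paper must let $r\to 1$ at the end.
\item You take the last crossing of $(1-r)M(r)=1$ and rescale affinely onto the Euclidean disk $D(a,(1-r_0)/2)$. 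This trades $|F'(0)|=1$ for $|F'(0)|=1/2$, but gains the global bound $|F'|\le 1$ on $\mathbb{D}$, which lets Schwarz--Pick act directly on $F'$.
\end{itemize}
Your route is more elementary and uses the hypothesis that $f$ is holomorphic on $\overline{\mathbb{D}}$ exactly where it is needed, namely $\phi(1)=0$. The paper's route needs no boundary regularity, and it yields exactly the Bloch-seminorm bound that the paper later assumes in Theorem 3.4.
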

    
     The classical Bloch theorem asserts that if $f$ is holomorphic in $\mathbb{D}$ with $f'(0)=1$,
     then there exists a universal constant $\beta>0$, independent of the function $f$, such that the image $f(\mathbb{D})$ contains a schlicht disk of radius at least $\beta$ (see \cite{Bloch-1925,Landau-1929}). The largest possible value of such a constant is called the \emph{Bloch constant}. Although many estimates have been obtained over the last century, the exact value of the Bloch constant is still unknown. Important contributions towards estimating this constant were made by Ahlfors and Grunsky \cite{Ahlfors-Grunsky-1937}, Bonk \cite{Bonk-1990}, and Chen and Gauthier \cite{Chen-Gauthier-1996}.

     \vspace{2mm}
     One of the initial estimates for the Bloch constant is based on related results concerning the univalent schlicht disks of bounded holomorphic mappings. In 1926, Landau \cite{Landau-1926} proved the following result, which calculated the precise value of the radius of this disk. 
     \begin{Thm}\cite{Landau-1926}
     	If $f$ is a holomorphic mapping with $f(0)=0$, $f'(0)=1$ and $|f(z)|<M$ for $z \in \mathbb{D}$, then $f$ is univalent in $\mathbb{D}_{r_0}$, and $f(\mathbb{D}_{r_0})$ contains a disk $\mathbb{D}_{\sigma_0}$, where $$r_0=\frac{1}{M+\sqrt{M^2-1}}\ \ \text{and} \ \ \sigma_0= Mr_0^2.$$
     	The quantities $r_0$ and $\sigma_0$ cannot be improved. The extremal function is $f_0(z)= Mz \left(\frac{1-Mz}{M-z}\right)$.
     \end{Thm}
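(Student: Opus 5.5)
We prove Landau's theorem in its sharp form in three steps: coefficient bounds, local univalence via a derivative estimate, and the covering disk. Write $f(z)=z+\sum_{n\ge 2}a_nz^n$ with $|f|<M$ on $\mathbb{D}$. Since $f'(0)=1\le M$ by Cauchy's estimate, and $M=1$ forces $f(z)=z$, we assume $M>1$.

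\emph{Step 1 (coefficients).} Parseval gives $1+\sum_{n\ge2}|a_n|^2\le M^2$, hence $|a_n|\le\sqrt{M^2-1}$. This crude bound already yields some radius, but not the sharp $r_0$. For sharpness we use Schur's algorithm. Set $\phi=f/M$, which maps $\mathbb{D}$ into $\mathbb{D}$ with $\phi(0)=0$ and $\phi'(0)=1/M$. Then $\phi(z)=z\,\chi(z)$, where $\chi:\mathbb{D}\to\overline{\mathbb{D}}$ and $\chi(0)=1/M$. The whole problem becomes a problem about self-maps $\chi$ of the disk with prescribed value $\chi(0)=a:=1/M$.

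\emph{Step 2 (univalence on $\mathbb{D}_{r_0}$).} We show $\operatorname{Re} f'(z)>0$, or at least $f'\neq0$ together with injectivity, for $|z|<r_0$. Take $z_1\ne z_2$ in $\mathbb{D}_r$ and write
\[
f(z_1)-f(z_2)=M\bigl[(z_1-z_2)\chi(z_1)+z_2(\chi(z_1)-\chi(z_2))\bigr].
\]
The Schwarz–Pick lemma gives $|\chi(z_1)-\chi(z_2)|\le \dfrac{(1-|\chi|^2)\,|z_1-z_2|}{\dots}$, and by subordination $|\chi(z)|\ge \dfrac{a-|z|}{1-a|z|}$. Combining these, we expect
\[
\frac{|f(z_1)-f(z_2)|}{|z_1-z_2|}\ge M\left(\frac{a-r}{1-ar}-r\cdot\frac{1-a^2}{(1-ar)^2}\right)+o(1),
\]
with the precise Schwarz–Pick bound on the chord (via the invariant form) giving the exact expression. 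This is exactly the lower bound for $|f_0'|$ at $-r$, namely $(a-2r+ar^2)/(1-ar)^2$ up to the factor $M$. It is positive iff $ar^2-2r+a>0$, that is, iff $r<\dfrac{1-\sqrt{1-a^2}}{a}=\dfrac{1}{M+\sqrt{M^2-1}}=r_0$. The main obstacle is carrying out the chord estimate sharply rather than only pointwise for $f'$. To do this, we reduce to pointwise: first show $\operatorname{Re}\bigl(e^{i\theta}f'\bigr)>0$ on $\mathbb{D}_{r_0}$ for a fixed direction, using $f'=M(\chi+z\chi')$ with $\operatorname{Re}\chi$ bounded below, $|z\chi'|$ bounded above by Schwarz–Pick, and extremality attained at $\chi(z)=(a-z)/(1-az)$. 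A function with positive real part of its derivative on a convex domain is univalent there (Noshiro–Warschawski).

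\emph{Step 3 (covering).} For $|z|=r_0$ we have
\[
|f(z)|=M r_0|\chi(z)|\ge M r_0\,\frac{a-r_0}{1-ar_0}.
\]
Since $a r_0^2-2r_0+a=0$, we get $\dfrac{a-r_0}{1-ar_0}=r_0$, so $|f(z)|\ge Mr_0^2=\sigma_0$ on $|z|=r_0$. Because $f$ is univalent on $\mathbb{D}_{r_0}$ with $f(0)=0$, the argument principle shows $f(\mathbb{D}_{r_0})\supset\mathbb{D}_{\sigma_0}$. For sharpness, take $f_0$, which corresponds to $\chi(z)=(a-z)/(1-az)$ (its sign convention matches the statement). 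Then $f_0'(r_0)=0$, so no larger radius gives univalence, and $f_0(r_0)=\sigma_0$ is a boundary point, so no larger covering disk works.
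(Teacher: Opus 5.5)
The paper does not prove this theorem. It quotes it from Landau \cite{Landau-1926} as background, so there is no argument in the paper to compare yours with.

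Judged on its own, your final route is correct and complete in substance: subordination, Schwarz--Pick, Noshiro--Warschawski, then Rouch\'e, and $f_0$ for sharpness. The core estimate is as follows. For $|z|=r<r_0$, subordination gives $\operatorname{Re}\chi(z)\ge \frac{a-r}{1-ar}$ and $|\chi(z)|\ge \frac{a-r}{1-ar}\ge 0$; the last inequality uses $r_0=a/(1+\sqrt{1-a^2})<a$. Schwarz--Pick then yields
\[
\operatorname{Re}\frac{f'(z)}{M}\ge \frac{a-r}{1-ar}-\frac{r\,(1-|\chi(z)|^2)}{1-r^2}\ge \frac{a-r}{1-ar}-\frac{r(1-a^2)}{(1-ar)^2}=\frac{a-2r+ar^2}{(1-ar)^2}>0 .
\]
Univalence on $\mathbb{D}_{r_0}$ follows, and your Step 3 computation $\frac{a-r_0}{1-ar_0}=r_0$ is right.

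Several points should be tidied:
\begin{enumerate}
\item The chord estimate in Step 2 is never actually established (note the ``$\dots$'' and the ``$o(1)$''), and it is not needed. Delete it and make the pointwise inequality above the argument.
\item Make explicit that the bound $1-|\chi|^2\le \frac{(1-a^2)(1-r^2)}{(1-ar)^2}$ comes from the lower bound on $|\chi|$. The lower bounds on $\operatorname{Re}\chi$ and on $|\chi|$ are attained at the same point $\frac{a-r}{1-ar}$, which is why nothing is lost. Simply dropping $|\chi|^2$ gives a bound that is already negative at $r=r_0$.
\item For the stated $f_0$, the minimum of $\operatorname{Re} f_0'$ on $|z|=r$ occurs at $z=+r$, not $-r$. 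Your Step 3 correctly uses $f_0'(r_0)=0$.
\item The covering claim does not need univalence. Rouch\'e applied to $f$ and $f-w$ on $|z|=r_0$, for $|w|<\sigma_0$, already gives $w\in f(\mathbb{D}_{r_0})$.
\item For the sharpness of $\sigma_0$, justify ``boundary point.'' A map univalent on $\mathbb{D}_{r_0}$ and continuous on its closure cannot take the boundary value $f_0(r_0)$ at an interior point; this follows from the open mapping theorem plus injectivity. Hence $\sigma_0\notin f_0(\mathbb{D}_{r_0})$.
\end{enumerate}
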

	
	 The Bloch and Landau theorems have inspired extensive research in geometric function theory. Various extensions have been obtained for several classes of mappings, including harmonic mappings, quasiregular mappings, biharmonic mappings, pluriharmonic mappings, logharmonic mappings, polyharmonic mappings, and $\alpha$-harmonic mappings; see, for example, \cite{Abdulhadi-Muhanna-2008, Chen-Gauthier-Hengartner-2000,Grigoryan-2006,Huang-2008,Liu-Ponnusamy-2024, Liu-Chen-2018,Liu-2008,Zhu-2015}.
	
	\vspace{1mm}
	 During the last four decades, harmonic mappings have become an active area of research because of their close connections with complex analysis, minimal surfaces, quasiconformal mappings, and nonlinear elliptic partial differential equations. A complex-valued harmonic mapping in $\mathbb{D}$ is a function $f = u + iv$, where both $u$ and $v$ are real-valued harmonic functions. We denote the standard complex differential operators by
	$$\frac{\partial}{\partial z} = \frac{1}{2}\left(\frac{\partial}{\partial x} - i\frac{\partial}{\partial y}\right) \quad \text{and} \quad \frac{\partial}{\partial \overline{z}} = \frac{1}{2}\left(\frac{\partial}{\partial x} + i\frac{\partial}{\partial y}\right)$$
	for $z = x + iy \in \mathbb{C}$, where $x$ and $y$ are real. For a continuously differentiable function $f$, we define the quantities:
	$$\Lambda_f = \max_{0 \le \theta \le 2\pi} \vert{}f_z + e^{-2i\theta}f_{\overline{z}}\vert{} = \vert{}f_z\vert{} + \vert{}f_{\overline{z}}\vert{}$$$$\lambda_f = \min_{0 \le \theta \le 2\pi} \vert{}f_z + e^{-2i\theta}f_{\overline{z}}\vert{} = \big\vert{} \vert{}f_z\vert{} - \vert{}f_{\overline{z}}\vert{} \big\vert{}$$
	where $f_z = \partial f / \partial z$ and $f_{\overline{z}} = \partial f / \partial \overline{z}$.   Every harmonic mapping in a simply connected domain admits the canonical decomposition
	\[
	f=h+\overline{g},
	\]
	where $h$ and $g$ are analytic in $\mathbb{D}$. The mapping $f$ is locally univalent if, and only if, its Jacobian
	\[
	J_f=|h'|^2-|g'|^2
	\]
	does not vanish. Furthermore, $f$ is sense-preserving precisely if
	\[
	|g'(z)|<|h'(z)|,\qquad z\in \mathbb{D}.
	\] 
	
	The modern theory of planar harmonic mappings was initiated by the pioneering work of Clunie and Sheil-Small \cite{Clunie-SheilSmall-1984}. Since then, harmonic mappings and their geometric subclasses have been investigated extensively by many researchers.
	
		\vspace{2mm}
	The extension of Bloch and Landau type theorems from holomorphic mappings to harmonic mappings is considerably more difficult because harmonic mappings are generally not conformal and involve both analytic and co-analytic parts. One of the first systematic studies in this direction was carried out by Chen, Gauthier and Hengartner \cite{Chen-Gauthier-Hengartner-2000}, who established two versions of the Landau--Bloch theorem for bounded harmonic mappings. Their estimates were later improved by Grigoryan \cite{Grigoryan-2006}, Huang \cite{Huang-2008}, Liu and Chen \cite{Liu-Chen-2018}, Zhu \cite{Zhu-2015}, and several other authors. 
	
	\vspace{2mm}
	The study of Landau and Bloch type theorems for harmonic mappings has gradually been extended to many subclasses by imposing different geometric or analytic conditions on the mappings. Among the most common assumptions is the bounded analytic dilatation condition
	\[
	g'(z)=\omega(z)h'(z),
	\]
	where $\omega$ is analytic in $\mathbb{D}$ and satisfies
	\[
	\|\omega\|_{\infty}\leq k<1.
	\]
	This condition guarantees that the mapping is quasiregular and provides a direct relation between the analytic and co-analytic parts. Under this assumption, several versions of the Bloch and Landau theorems have been established. In particular, Liu and Chen \cite{Liu-Chen-2018} obtained sharp Landau--Bloch type theorems for harmonic mappings with bounded analytic dilatation by using geometric methods. Their work considerably improved several earlier estimates in the literature.
	
	Another important direction is the study of harmonic mappings satisfying differential inequalities instead of an explicit analytic dilatation. In this setting, Nirenberg \cite{Nirenberg-1953} introduced elliptic mappings, which were later characterized for harmonic mappings by Chen and Ponnusamy \cite{Chen-Ponnusamy-2020}. A harmonic mapping $f=h+\overline{g} $ is $(K,K')$-elliptic if there exist constants $K\geq1$ and $K'\geq0$ such that
	\[
	\Lambda_f^2\leq KJ_f+K',
	\]
	where $\Lambda_f=|h'|+|g'|$ and $J_f$ denotes the Jacobian of $f$. When $K'=0$, this class reduces to the well-known class of $K$-quasiregular harmonic mappings. These classes contain many important families of harmonic mappings and provide a natural framework for extending classical results from conformal mappings to more general settings.
	
	In this paper, we consider the inhomogeneous relation
	$$g'(z) = \omega(z)h'(z) + \psi(z)$$
	where both $\omega$ and $\psi$ are analytic functions in $\mathbb{D}$, with $\Vert{}\omega\Vert{}_\infty \le k < 1$ and $\Vert{}\psi\Vert{}_\infty \le M$. This additional term destroys the homogeneous structure of the classical analytic dilatation equation, meaning standard techniques are no longer directly applicable and the mapping is generally no longer quasiregular in the classical sense, even though it may remain sense-preserving. 
	
	\vspace{1mm}
		\vspace{2mm}
	Let $0 \le k < 1$ and $M \ge 0$. We say that a harmonic mapping $f = h + \overline{g}$ belongs to the class $\mathcal{H}_{k,M}$ if:
	\begin{enumerate}
		\item $h$ and $g$ are analytic in $\mathbb{D}$;
		\item $g(0) = 0$;
		\item there exist analytic functions $\omega, \psi$ in $\mathbb{D}$ such that $g'(z) = \omega(z)h'(z) + \psi(z)$;
		\item $\|\omega\|_\infty \le k$ and $\|\psi\|_\infty \le M$;
		\item $f$ is sense-preserving, \textit{i.e.}, $|g'(z)| < |h'(z)|$ for all $z \in \mathbb{D}$.
	\end{enumerate}
	Observe that when $\psi \equiv 0$, the class $\mathcal{H}_{k,M}$ reduces to the familiar class of harmonic mappings with bounded analytic dilatation. The normalized subclass $\mathcal{H}_{k,M}^0$ consists of those $f \in \mathcal{H}_{k,M}$ satisfying $f(0) = 0$, $h'(0) = 1$, and $g(0) = 0$. At the origin, $|g'(0)| = |\omega(0)h'(0) + \psi(0)| \le k + M$. We specifically impose the structural condition $k + M < 1$, which guarantees that the mapping remains strongly sense-preserving at the normalization scale, ensuring that the Jacobian $J_f(0) = 1 - |g'(0)|^2 > 0$.

	\vspace{2mm}
	Our first main result establishes a Bloch type theorem for the normalized class
	$H_{k,M}^{0}$. We prove that if $k+M<1$, then every mapping in this class maps the
	unit disk $\mathbb{D}$ onto a domain containing a schlicht disk whose radius depends only on
	$k$ and $M$. Then we prove two versions of Landau type theorem for this class of mappings. First, we assume that the harmonic mapping is bounded in the unit disk. Secondly, we assume that the
	analytic part has bounded Bloch semi-norm. 
	
	\vspace{1mm}
	Our results extend the classical Landau--Bloch theory from harmonic mappings
	with homogeneous analytic dilatation to a more general class of harmonic
	mappings with inhomogeneous analytic dilatation. Our result also complement several earlier results obtained in
	\cite{Allu-Kumar-2024, Allu-Kumar-SCV, Allu-Kumar-Alpha,  Chen-Gauthier-Hengartner-2000, Grigoryan-2006, Huang-2008, Liu-Chen-2018, Zhu-2015}.
	
	\vspace{2mm}
	The paper is organized as follows. In Section 2, we state and establish several preliminary results that will be used throughout the paper. In Section 3, we prove the Bloch theorem for the class
	$H_{k,M}^{0}$, and  establish two versions of the Landau theorem under suitable additional assumptions.
	
	\vspace{2mm}
		\section{Auxiliary Results}
	In this section we establish two elementary lemmas that form the main tools in the proofs of our Bloch and Landau theorems. 
	
	\begin{lem}\label{Lemma-1}
		Let $F=H+\overline{G}$ be harmonic in $\mathbb{D}_R$. Suppose $\lambda_F(0) \ge \lambda > 0$ and  for $|z| \le R$, $|H''(z)| + |G''(z)| \le L$. Then for every $0 < s \le R$,
		\[ |F(z_1) - F(z_2)| \ge (\lambda - Ls)|z_1 - z_2| \]
		for all $z_1,z_2 \in \mathbb{D}_s$.
		
		In particular, if $s < \lambda/L$, then $F$ is univalent in $\mathbb{D}_s$. Moreover, if $s \le \lambda/(2L)$, then
		\[ |F(z_1) - F(z_2)| \ge \frac{\lambda}{2}|z_1 - z_2|. \]
	\end{lem}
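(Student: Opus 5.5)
The plan is to write the difference $F(z_1)-F(z_2)$ as a line integral along the segment $[z_1,z_2]$. Since $\mathbb{D}_s$ is convex, this segment stays in $\mathbb{D}_s$. The integrand is then compared with its value at the origin. Set $\gamma(t)=z_2+t(z_1-z_2)$, $t\in[0,1]$. Then
\[
F(z_1)-F(z_2)=\int_\gamma \big(H'(z)\,dz+\overline{G'(z)}\,d\overline{z}\big),
\]
which we split as
\[
F(z_1)-F(z_2)=\big(H'(0)(z_1-z_2)+\overline{G'(0)}\,\overline{(z_1-z_2)}\big)+\int_\gamma\Big((H'(z)-H'(0))\,dz+\big(\overline{G'(z)}-\overline{G'(0)}\big)\,d\overline{z}\Big).
\]

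For the first term, the definition of $\lambda_F$ gives
\[
\big|H'(0)(z_1-z_2)+\overline{G'(0)}\,\overline{(z_1-z_2)}\big|\ge \big||H'(0)|-|G'(0)|\big|\,|z_1-z_2| = \lambda_F(0)|z_1-z_2|\ge \lambda|z_1-z_2|.
\]
Indeed, writing $z_1-z_2=|z_1-z_2|e^{i\theta}$, the expression equals $|z_1-z_2|\,|H'(0)+\overline{G'(0)}e^{-2i\theta}|$, which is at least $\lambda_F(0)|z_1-z_2|$.

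For the error term, take $z\in\mathbb{D}_s$ and integrate along the radius from $0$ to $z$. This gives
\[
|H'(z)-H'(0)|+|G'(z)-G'(0)|\le \int_0^{|z|}\big(|H''|+|G''|\big)\,|d\zeta|\le L|z|\le Ls.
\]
So the integrand of the error term is bounded by $Ls\,|dz|$, and the error term has modulus at most $Ls|z_1-z_2|$. The reverse triangle inequality then yields
\[
|F(z_1)-F(z_2)|\ge(\lambda-Ls)|z_1-z_2|.
\]

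Both consequences follow directly. If $s<\lambda/L$, the factor $\lambda-Ls$ is positive, so $F(z_1)\ne F(z_2)$ whenever $z_1\ne z_2$, and $F$ is univalent on $\mathbb{D}_s$. If $s\le\lambda/(2L)$, then $\lambda-Ls\ge\lambda/2$.

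The only delicate point is that the hypothesis on $|H''|+|G''|$ is stated for $|z|\le R$. The bound is therefore used only on radii inside $\mathbb{D}_s\subset\mathbb{D}_R$, and for $s=R$ only at interior points, which is fine. There is no real obstacle. The key step is simply the observation that the linear part at $0$ is bounded below by $\lambda_F(0)$, combined with the Lipschitz control of $H'$ and $G'$.
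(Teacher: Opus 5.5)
Your proposal is correct and follows the paper's proof essentially step for step: the segment parametrization, the split at the origin, the lower bound $\lambda_F(0)|z_1-z_2|$ for the linear part, the radial-integration estimate $|H'(z)-H'(0)|+|G'(z)-G'(0)|\le Ls$, and the reverse triangle inequality. Writing the integral with $dz$ and $d\overline{z}$ instead of the parameter $t$ is only a notational difference.
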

	\begin{proof}
	Let $z_1,z_2\in\mathbb D_s$ and define the line segment
	\[
	\gamma(t)=z_2+t(z_1-z_2), \qquad 0\le t\le1.
	\]
	Since $\mathbb D_s$ is convex, $\gamma([0,1])\subset\mathbb D_s$. Using the chain rule,
	\[
	\frac{d}{dt}F(\gamma(t))
	=H'(\gamma(t))(\gamma'(t))
	+\overline{G'(\gamma(t))(\gamma'(t))},
	\]
	where $\gamma'(t)=z_1-z_2$. Hence
	\[
	F(z_1)-F(z_2)
	=\int_0^1\left[
	H'(\gamma(t))(z_1-z_2)
	+\overline{G'(\gamma(t))(z_1-z_2)}
	\right]dt.
	\]
	Adding and subtracting the values at the origin, we have
	\begin{align*}
		F(z_1)-F(z_2)
		&=\Big(H'(0)(z_1-z_2)
		+\overline{G'(0)(z_1-z_2)}\Big)  \\
		&\quad
		+\int_0^1\Big[(H'(\gamma(t))-H'(0))(z_1-z_2) +\overline{(G'(\gamma(t))-G'(0))(z_1-z_2)}\Big]dt.
	\end{align*}
	Since
	\[
	\lambda_F(0)
	=\min_{|\xi|=1}
	\big|H'(0)\xi+\overline{G'(0)\xi}\big|,
	\]
	we have
	\[
	\big|H'(0)(z_1-z_2)
	+\overline{G'(0)(z_1-z_2)}\big|
	\ge
	\lambda_F(0)|z_1-z_2|.
	\]
	Next, for each $t\in[0,1]$, the fundamental theorem of calculus gives
	\[
	H'(\gamma(t))-H'(0)
	=\int_0^1
	H''(u\gamma(t))\,\gamma(t)\,du,
	\]
	and therefore
	\begin{equation}\label{Eq-1}
	|H'(\gamma(t))-H'(0)|
	\le |\gamma(t)|
	\int_0^1|H''(u\gamma(t))|\,du.
	\end{equation}
	Similarly,
	\begin{equation}\label{Eq-2}
	|G'(\gamma(t))-G'(0)|
	\le |\gamma(t)|
	\int_0^1|G''(u\gamma(t))|\,du.
	\end{equation}
	By \eqref{Eq-1}, \eqref{Eq-2} and the hypothesis $|H''(z)|+|G''(z)|\le L,$ we obtain
	\[
	|H'(\gamma(t))-H'(0)|
	+
	|G'(\gamma(t))-G'(0)|
	\le Ls.
	\]
	Consequently,
	\begin{align*}
		&
		\left|
		(H'(\gamma(t))-H'(0))(z_1-z_2)
		+\overline{(G'(\gamma(t))-G'(0))(z_1-z_2)}
		\right|
		\\
		&\qquad\le
		\Big(
		|H'(\gamma(t))-H'(0)|
		+
		|G'(\gamma(t))-G'(0)|
		\Big)
		|z_1-z_2|
		\\
		&\qquad\le Ls\,|z_1-z_2|.
	\end{align*}
	Applying the reverse triangle inequality, we conclude that
	\begin{align*}
		|F(z_1)-F(z_2)|
		&\ge
		\lambda_F(0)|z_1-z_2|
		-
		Ls|z_1-z_2| \\
		&\ge
		(\lambda-Ls)|z_1-z_2|.
	\end{align*}
	If $s<\lambda/L$, then $\lambda-Ls>0$, and hence
	\[
	|F(z_1)-F(z_2)|>0
	\]
	whenever $z_1\ne z_2$. Thus $F$ is univalent in $\mathbb D_s$.
	\vspace{2mm}
	
	\noindent Finally, if $s\le\lambda/(2L)$, then
	\[
	\lambda-Ls\ge\frac{\lambda}{2},
	\]
	which gives
	\[
	|F(z_1)-F(z_2)|
	\ge
	\frac{\lambda}{2}|z_1-z_2|.
	\]
	This completes the proof.
\end{proof}
	\begin{lem}\label{Lemma-2}
		Let $F$ be harmonic and univalent on $\overline{\mathbb{D}}_s$, and suppose $|F(z) - F(0)| \ge R$ for $|z| = s$. Then $\mathbb{D}(F(0), R) \subset F(\mathbb{D}_s)$. Consequently, if $|F(z) - F(0)| \ge \rho$ on $\partial\mathbb{D}_s$, then $F(\mathbb{D})$ contains a schlicht disk of radius $\rho$.
	\end{lem}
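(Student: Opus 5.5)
The plan is a topological argument that combines the Jordan curve theorem with a connectedness argument.

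Since $F$ is continuous and injective on the compact set $\overline{\mathbb{D}}_s$, it is a homeomorphism onto its image. Hence $\Gamma = F(\partial\mathbb{D}_s)$ is a Jordan curve. By invariance of domain, $U = F(\mathbb{D}_s)$ is open and connected, and $F(\overline{\mathbb{D}}_s)=U\cup\Gamma$ is compact.

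\emph{Step 1: $\Gamma$ avoids the target disk.} By hypothesis $|F(z)-F(0)|\ge R$ on $|z|=s$, so $\Gamma\cap \mathbb{D}(F(0),R)=\emptyset$.

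\emph{Step 2: connectedness.} Write $D=\mathbb{D}(F(0),R)$ and split it as
\[
D=(D\cap U)\cup\bigl(D\setminus F(\overline{\mathbb{D}}_s)\bigr).
\]
The first set is open because $U$ is open. The second is open because $F(\overline{\mathbb{D}}_s)$ is compact, hence closed. The two sets are disjoint. They cover $D$ because, by Step 1, any point of $D$ lying in $F(\overline{\mathbb{D}}_s)$ must lie in $U$ rather than on $\Gamma$. The first set contains $F(0)$, so it is nonempty. Since $D$ is connected, the second set is empty, and therefore $D\subset U=F(\mathbb{D}_s)$.

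\emph{Step 3: the schlicht disk.} For the consequence, apply the above with $R=\rho$. Then $F$ is univalent on the region $\Omega=F^{-1}(D)\cap\mathbb{D}_s\subset\mathbb{D}$, and $F(\Omega)=D$. This is exactly a schlicht disk of radius $\rho$ in $F(\mathbb{D})$, in the sense of the definition in the introduction.

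The main point needing care is that $F(\mathbb{D}_s)$ is open. This relies on invariance of domain for injective continuous maps of planar domains, or alternatively on the open mapping property of locally univalent harmonic maps. Harmonicity is not otherwise needed.
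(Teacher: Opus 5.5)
Your proof is correct, but it takes a different route from the paper's. The paper relies on the Jordan curve theorem. It notes that $\Gamma=F(\partial\mathbb{D}_s)$ is a Jordan curve and asserts, without proof, that $F(\mathbb{D}_s)$ is the bounded component of $\mathbb{C}\setminus\Gamma$. It then concludes that $\mathbb{D}(F(0),R)$ lies in that component, since the disk is connected, misses $\Gamma$, and contains $F(0)$.

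Despite your opening sentence, your argument never uses that $\Gamma$ is a Jordan curve. You only need three facts:
\begin{itemize}
\item $F(\mathbb{D}_s)$ is open, by invariance of domain;
\item $F(\overline{\mathbb{D}}_s)$ is closed, being compact;
\item the boundary estimate, which lets you run a clopen argument inside the target disk.
\end{itemize}

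This buys two things. First, it supplies the step the paper leaves implicit. Identifying $F(\mathbb{D}_s)$ with the interior domain of $\Gamma$ itself requires essentially your open/closed argument, or the Schoenflies theorem. Second, it shows the lemma holds under weaker hypotheses. Injectivity on the open disk $\mathbb{D}_s$ plus continuity up to the boundary is enough; for the inclusion $\mathbb{D}(F(0),R)\subset F(\mathbb{D}_s)$ alone, openness of $F$ on $\mathbb{D}_s$ suffices. Injectivity on $\partial\mathbb{D}_s$ is never used. The paper's route is shorter to state but rests on heavier topology that it does not justify.

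One small addition for Step 3. A schlicht disk requires $\Omega$ to be a region, so you should say why $\Omega$ is connected. By invariance of domain, $F|_{\mathbb{D}_s}$ is a homeomorphism onto $U$, so $\Omega=(F|_{\mathbb{D}_s})^{-1}(D)$ is homeomorphic to $D$ and hence connected.
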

	
	\begin{proof}
		Since $F$ is continuous and injective on $\overline{\mathbb{D}}_s$, the curve $F(\partial\mathbb{D}_s)$ is a Jordan curve. The image $F(\mathbb{D}_s)$ is the bounded component enclosed by this curve. If the boundary curve stays outside the Euclidean disk $\mathbb{D}(F(0), R)$, then the whole disk $\mathbb{D}(F(0), R)$ lies in the bounded component.
	\end{proof}
	\begin{Lem}\label{Lemma-A}\cite[Theorem 3]{Colonna-1989}
	Let $F$ be a complex-valued harmonic function in $\mathbb{D}$, and suppose
	$$|F(z)| \le C, \quad z \in \mathbb{D}.$$
	Then $$\Lambda_F(z) := |F_z(z)| + |F_{\overline{z}}(z)| \le \frac{4C}{\pi} \frac{1}{1-|z|^2}.$$Equivalently, if $F = H + \bar{G}$, then:$$|H'(z)| + |G'(z)| \le \frac{4C}{\pi} \frac{1}{1-|z|^2}.$$
	\end{Lem}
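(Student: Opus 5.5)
The estimate is local and conformally invariant, so we reduce it to the origin. Write $F=H+\overline{G}$; this is possible because $\mathbb{D}$ is simply connected. Since $|F_z|+|F_{\overline z}|=\max_{|\xi|=1}|F_z\xi+F_{\overline z}\overline{\xi}|$, it suffices to fix a unit vector $\xi$, i.e.\ a real direction $\theta$ with $\xi=e^{i\theta}$, and bound the directional derivative of $F$ in that direction.

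\emph{Step 1: the origin.} Fix $\theta$, choose a unimodular $\alpha$, and consider the real harmonic function $u=\mathrm{Re}(\overline{\alpha}F)$. Its gradient satisfies $|\nabla u(0)|=2|\alpha H'(0)... |$; more precisely, $\partial_\theta u$ at $0$ equals $\mathrm{Re}\big(\overline{\alpha}(H'(0)e^{i\theta}+\overline{G'(0)}e^{-i\theta})\big)$. Choosing $\alpha$ and $\theta$ appropriately, this quantity equals $|H'(0)|+|G'(0)|$. We also have $|u|\le C$ in $\mathbb{D}$.

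For a real harmonic function bounded by $C$, the classical sharp estimate is $|\nabla u(0)|\le 4C/\pi$. To prove it, use the Poisson representation
\[
u(z)=\frac{1}{2\pi}\int_0^{2\pi}\frac{1-|z|^2}{|e^{it}-z|^2}\,u(e^{it})\,dt .
\]
This holds on $\mathbb{D}_r$ with $r\to1$ after dilating $u(rz)$. Differentiating at $0$ gives $\nabla u(0)=\frac{1}{\pi}\int_0^{2\pi} e^{it}u(e^{it})\,dt$ (as a vector). Pairing with a unit vector and using $|u|\le C$ yields the bound
\[
\frac{C}{\pi}\int_0^{2\pi}|\cos(t-\phi)|\,dt=\frac{4C}{\pi}.
\]
Hence $|H'(0)|+|G'(0)|\le 4C/\pi$.

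\emph{Step 2: transport to an arbitrary point.} For fixed $a\in\mathbb{D}$, let $\varphi_a(\zeta)=(\zeta+a)/(1+\overline{a}\zeta)$ and set $F_a=F\circ\varphi_a=H\circ\varphi_a+\overline{G\circ\varphi_a}$. This is still harmonic with $|F_a|\le C$. By the chain rule, $(H\circ\varphi_a)'(0)=H'(a)(1-|a|^2)$, and similarly for $G$. Applying Step 1 to $F_a$ gives
\[
(|H'(a)|+|G'(a)|)(1-|a|^2)\le 4C/\pi,
\]
which is the claim.

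\emph{Main obstacle.} The main obstacle is justifying the boundary integral in Step 1 when $u$ has no continuous boundary values. I handle this by applying the argument to $u_r(z)=u(rz)$, obtaining $r|\nabla u(0)|\le 4C/\pi$, and then letting $r\to1$.
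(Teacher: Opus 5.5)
Your proof is correct. The paper gives no argument for this lemma; it simply quotes \cite[Theorem 3]{Colonna-1989}. So your proof stands in for a citation rather than competing with an in-paper proof.

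Your route is the standard proof of Colonna's estimate, and each step checks out:
\begin{enumerate}
\item You reduce $|H'(0)|+|G'(0)|$ to a directional derivative of the real harmonic function $\mathrm{Re}(\overline{\alpha}F)$.
\item You bound this at the origin by the Poisson-kernel estimate $\frac{C}{\pi}\int_0^{2\pi}|\cos(t-\phi)|\,dt=\frac{4C}{\pi}$. Applying it to the dilates $u(rz)$ and letting $r\to1$ correctly handles the lack of boundary values.
\item You transport the bound to an arbitrary point $a$ with $\varphi_a$, using $\varphi_a'(0)=1-|a|^2$.
\end{enumerate}

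Two small remarks. First, the aside ``$|\nabla u(0)|=2|\alpha H'(0)\ldots|$'' is wrong as written. Since $u=\mathrm{Re}(\overline{\alpha}H+\alpha G)$, one has $|\nabla u(0)|=|\overline{\alpha}H'(0)+\alpha G'(0)|$, with no factor $2$. You never use this, though, and the directional-derivative formula you give immediately afterwards is the right one. Second, the rotation by $\alpha$ is not needed. Applying the Poisson formula directly to the complex-valued dilate gives, with $\xi=e^{i\theta}$,
\[
r\bigl(F_z(0)\xi+F_{\overline{z}}(0)\overline{\xi}\bigr)=\frac{1}{\pi}\int_0^{2\pi}\cos(t-\theta)\,F(re^{it})\,dt .
\]
The modulus of the right-hand side is at most $4C/\pi$, which gives $\Lambda_F(0)\le 4C/\pi$ in one step.

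The only thing the citation adds beyond your argument is the sharpness of the constant $4/\pi$, which the paper never uses.
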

The above lemmas provide the quantitative estimates required in the proofs of our main results.

	\section{Bloch and Landau Type Results for the Class $H_{k,M}$}
	In this section, we establish the main results of the paper. We first prove a Bloch type theorem for the normalized class $H_{k,M}^{0}$. We then show that a universal Landau theorem does not hold for this class by constructing an explicit counterexample. Finally, we obtain two versions of the Landau theorem under natural additional assumptions, namely boundedness of the harmonic mapping and boundedness of the Bloch seminorm of its analytic part. We begin with the following Bloch type theorem for the normalized class $H_{k,M}^{0}$.
	
	\begin{thm}
		Let $f = h + \overline{g} \in \mathcal{H}_{k,M}^0$ and assume $k + M < 1$. Set $q := k + M$ and $\lambda := 1 - q > 0$. Then $f(\mathbb{D})$ contains a schlicht disk of radius at least
		\[ B_{k,M} = \frac{3(1 - k - M)^2}{128}. \]
	\end{thm}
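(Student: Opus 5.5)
The plan is a rescaling argument of Bloch type: find a point where $h'$ is large relative to the distance to the boundary, and then apply Lemma \ref{Lemma-1} and Lemma \ref{Lemma-2} on a small disk around that point. Throughout, $\lambda=1-k-M$.

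\emph{Step 1: reduction to a mapping smooth up to the boundary.} For $0<r<1$, put $f_r(z)=f(rz)/r=h_r+\overline{g_r}$. Then $h_r'(z)=h'(rz)$ and $g_r'(z)=g'(rz)=\omega(rz)h_r'(z)+\psi(rz)$. Hence $f_r\in\mathcal{H}^0_{k,M}$, and $f_r$ is smooth on $\overline{\mathbb{D}}$.

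Suppose we show that $f_r(\mathbb{D})$ contains a schlicht disk of radius $\lambda^2/64$. Since $f(\mathbb{D}_r)=r f_r(\mathbb{D})$, the set $f(\mathbb{D})$ then contains a schlicht disk of radius $r\lambda^2/64$. Choosing $r\ge 3/4$ gives radius at least $3\lambda^2/256$; taking $r\to 1$ gives radius arbitrarily close to $\lambda^2/64$. Since $3/128<1/64$, it suffices to prove the claim with constant $\lambda^2/64$ for $f_r$ with $r$ close to $1$. So we may assume $h'$ and $g'$ are continuous on $\overline{\mathbb{D}}$.

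\emph{Step 2: choice of the center.} Let $\varphi(z)=(1-|z|)|h'(z)|$. This is continuous on $\overline{\mathbb{D}}$ and vanishes on $\partial\mathbb{D}$, so it attains its maximum at some $z_0\in\mathbb{D}$. Moreover $\varphi(z_0)\ge\varphi(0)=1$, hence $|h'(z_0)|\ge 1$. Set $\rho=(1-|z_0|)/2$.

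On $\mathbb{D}(z_0,\rho)$ we have $1-|z|\ge\rho$, so
$$|h'(z)|\le \varphi(z_0)/\rho=2|h'(z_0)|.$$
Sense-preservation gives $|g'(z)|<|h'(z)|\le 2|h'(z_0)|$ there as well. By Cauchy's estimate on disks of radius $\rho/2$ centered at points of $\mathbb{D}(z_0,\rho/2)$,
$$|h''(z)|+|g''(z)|\le 8|h'(z_0)|/\rho \quad\text{on } \mathbb{D}(z_0,\rho/2).$$

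The key point is the lower bound on $\lambda_f(z_0)$, and this is where the inhomogeneous term must be absorbed:
$$\lambda_f(z_0)=|h'(z_0)|-|g'(z_0)|\ge (1-k)|h'(z_0)|-M\ge (1-k-M)|h'(z_0)|=\lambda|h'(z_0)|.$$
The last inequality uses $|h'(z_0)|\ge 1$. This is exactly why the maximizing point must be chosen so that $|h'(z_0)|\ge 1$, which the normalization $h'(0)=1$ provides.

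\emph{Step 3: rescaling and the two lemmas.} Let $F(\zeta)=f(z_0+\tfrac{\rho}{2}\zeta)=H+\overline{G}$ on $\mathbb{D}$. Then
$$\lambda_F(0)\ge \tfrac{\rho}{2}\lambda|h'(z_0)|=:\lambda', \qquad |H''|+|G''|\le \tfrac{\rho^2}{4}\cdot\tfrac{8|h'(z_0)|}{\rho}=2\rho|h'(z_0)|=:L.$$

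Apply Lemma \ref{Lemma-1} with $s=\lambda'/(2L)=\lambda/8\le 1$. It shows that $F$ is univalent on $\overline{\mathbb{D}}_s$ (applied on a slightly larger disk), and that for $|\zeta|=s$,
$$|F(\zeta)-F(0)|\ge \tfrac{\lambda'}{2}s=\tfrac{\lambda^2\rho|h'(z_0)|}{32}=\tfrac{\lambda^2\varphi(z_0)}{64}\ge\tfrac{\lambda^2}{64}.$$
Lemma \ref{Lemma-2} then yields a schlicht disk of radius $\lambda^2/64$ in $F(\mathbb{D}_s)\subset f(\mathbb{D})$. Combined with Step 1, this gives the stated bound $B_{k,M}=3(1-k-M)^2/128$.

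The main obstacle is Step 2. The additive term $\psi$ does not scale under the rescaling, so the lower bound on $\lambda_f(z_0)$ only works because the extremal point satisfies $|h'(z_0)|\ge 1$. I would verify carefully that the maximization of $\varphi$ gives this, and that the Cauchy estimates are valid on the chosen subdisk.
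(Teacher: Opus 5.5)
Your argument has a genuine gap in the arithmetic that closes Step 1, and it costs you the constant claimed in the statement. Step 3 gives, for $f_r$, a schlicht disk of radius $\lambda^2\varphi(z_0)/64\ge\lambda^2/64$, and you conclude via ``since $3/128<1/64$''. That inequality is false: $1/64=2/128<3/128$. So your argument only shows that $f(\mathbb{D})$ contains schlicht disks of radius $r\lambda^2/64$ for every $r<1$. That is two thirds of $B_{k,M}=3\lambda^2/128$. The structural ingredients are sound, and they are also the paper's mechanism:
the dilation $f_r$ stays in $\mathcal{H}^0_{k,M}$ because $\psi$ is not rescaled;
the extremal point has $|h'(z_0)|\ge1$;
this absorbs the inhomogeneous term through $|g'(z_0)|\le k|h'(z_0)|+M\le(k+M)|h'(z_0)|$. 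The paper's version of this last step is $|\widetilde\psi(0)|\le M/|h'(z_0)|\le M$ after normalizing $|H'(0)|=1$.

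The loss comes from your local estimates. Maximizing the Euclidean quantity $(1-|z|)|h'(z)|$ only gives $|h'|\le2|h'(z_0)|$ on $\mathbb{D}(z_0,\rho)$. You then apply Cauchy's estimate on disks of radius $\rho/2$, even though the univalence radius you end up using is just $\lambda\rho/16$.

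The paper instead maximizes $(r^2-|z|^2)|h'(z)|$ on $\mathbb{D}_r$ and precomposes with the automorphism $T$ of $\mathbb{D}$ onto $\mathbb{D}_r$ with $T(0)=z_0$, setting $F=\frac{r}{C}(f\circ T-f(z_0))$. Then $|H'(0)|=1$ and $|H'(\zeta)|\le(1-|\zeta|^2)^{-1}$, so $|H'|,|G'|\le4/3$ on $|\zeta|\le1/2$. This gives $L=32/3$ on $|\zeta|\le1/4$, $s=3\lambda/64$, and radius $3\lambda^2/128$.

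Your own route can also be repaired. For $|z-z_0|\le\sigma<\rho$, Cauchy's estimate on $\mathbb{D}(z,\rho-\sigma)$ gives $|h''(z)|+|g''(z)|\le4|h'(z_0)|/(\rho-\sigma)$. Apply Lemma~\ref{Lemma-1} to $\zeta\mapsto f_r(z_0+\zeta)$ with $s=\sigma=\lambda\rho/(8+\lambda)$; this gives radius $\lambda^2\varphi(z_0)/(4(8+\lambda))\ge\lambda^2/36$. Since $1/36>3/128$, choosing $r=27/32$ in Step 1 then yields a schlicht disk of radius at least $3\lambda^2/128$ outright. This is in fact slightly cleaner than the paper's last step, where letting $r\to1$ by itself only gives radii $r\cdot3\lambda^2/128$ for each $r<1$.
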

	
	\begin{proof}
		Fix $0 < r < 1$. Define $$W_r(z) = (r^2 - |z|^2)|h'(z)|$$ for $|z| \le r$. Since $W_r$ is continuous and vanishes on $\partial\mathbb{D}_r$, it attains its maximum at some point $z_0 \in \mathbb{D}_r$. Set $C := W_r(z_0)$. Since $h'(0) = 1$, $C \ge W_r(0) = r^2$. Thus $C \ge r^2$. Let
		\[ T(\zeta) = \frac{r(z_0 + r\zeta)}{r + \overline{z_0}\zeta}. \]
		Then $T$ maps $\mathbb{D}$ conformally onto $\mathbb{D}_r$ and $T(0) = z_0$. Moreover,
		\[ |T'(\zeta)| = \frac{r(r^2 - |z_0|^2)}{|r + \overline{z_0}\zeta|^2}.\]
		A simple computation shows that
		\[ (1 - |\zeta|^2)|T'(\zeta)| = \frac{r^2 - |T(\zeta)|^2}{r}. \]
		Define the harmonic mapping
		\[ F(\zeta) = H(\zeta) + \overline{G(\zeta)} := \frac{r}{C} (f(T(\zeta)) - f(z_0)). \]
		Then $H'(\zeta) = \frac{r}{C} h'(T(\zeta)) T'(\zeta)$, and therefore
		\[ |H'(0)| = \frac{r}{C} |h'(z_0)| |T'(0)|. \]
		Since $|T'(0)| = (r^2 - |z_0|^2)/r$ and $C = (r^2 - |z_0|^2)|h'(z_0)|$, we have $|H'(0)| = 1$.
		Furthermore,
		\[ (1 - |\zeta|^2)|H'(\zeta)| = \frac{W_r(T(\zeta))}{C} \le 1. \]
		Hence $|H'(\zeta)| \le \frac{1}{1 - |\zeta|^2}$. In particular, for $|\zeta| \le 1/2$, $|H'(\zeta)| \le 4/3$.
		
		\vspace{2mm}
		Since $f$ is sense-preserving, the map $F$ is also sense-preserving. Therefore $|G'(\zeta)| < |H'(\zeta)|$, which implies $|G'(\zeta)| \le 4/3$ for $|\zeta| \le 1/2$.
		
		By Cauchy's estimate applied to the analytic functions $H'$ and $G'$, for $|\zeta| \le 1/4$, we have $|H''(\zeta)| \le 16/3$ and $|G''(\zeta)| \le 16/3$. Thus, we have
		\[  |H''(\zeta)| + |G''(\zeta)| \le \frac{32}{3}, \quad |\zeta| \le \frac{1}{4}. \]
		
		We next estimate the lower norm of $D_F(0)$. The equation $g' = \omega h' + \psi$ transforms into $G'(\zeta) = \widetilde{\omega}(\zeta)H'(\zeta) + \widetilde{\psi}(\zeta)$, where $\widetilde{\omega}(\zeta) = \omega(T(\zeta))$ and $\widetilde{\psi}(\zeta) = \frac{r}{C}\psi(T(\zeta))T'(\zeta)$. At $\zeta = 0$,
		\[ |\widetilde{\psi}(0)| \le \frac{r}{C} M \frac{r^2 - |z_0|^2}{r} = \frac{M(r^2 - |z_0|^2)}{C}. \]
		Since $C = (r^2 - |z_0|^2)|h'(z_0)|$, we get $$|\widetilde{\psi}(0)| \le \frac{M}{|h'(z_0)|}.$$
		Note that
		\[ |h'(z_0)| = \frac{C}{r^2 - |z_0|^2} \ge \frac{r^2}{r^2 - |z_0|^2} \ge 1. \]
		Therefore $|\widetilde{\psi}(0)| \le M$. Also, $|\widetilde{\omega}(0)| \le k$. Since $|H'(0)| = 1$, $|G'(0)| \le k + M = q$. Hence
		\[ \lambda_f(0) = |H'(0)| - |G'(0)| \ge 1 - q = \lambda. \]
		Applying Lemma \ref{Lemma-1} with $L = 32/3$ and $ s = \frac{\lambda}{2L} = \frac{3\lambda}{64}$, we conclude that  $F$ is univalent on $\mathbb{D}_s$, and 
		$$|F(\zeta) - F(0)| \ge \frac{\lambda}{2}|\zeta|$$ for $|\zeta| \le s$. In particular, on $|\zeta| = s$,
		\[ |F(\zeta) - F(0)| \ge \frac{\lambda s}{2} = \frac{3\lambda^2}{128}. \]
		By Lemma \ref{Lemma-2}, $F(\mathbb{D}_s)$ contains a schlicht disk of radius $\rho = \frac{3\lambda^2}{128}$. Returning to the original map, $f(T(\mathbb{D}_s))$ contains a schlicht disk of radius $\frac{C}{r} \rho$. Since $C \ge r^2$, we have $\frac{C}{r} \ge r$. Thus, $f(\mathbb{D})$ contains a schlicht disk of radius at least $r\rho$. Since this holds for any $r \in (0, 1)$, we can take the supremum as $r \to 1$ to obtain the uniform lower bound:
		\[ B_{k,M} = \rho = \frac{3\lambda^2}{128} = \frac{3(1 - k - M)^2}{128}. \]
		This completes the proof of the theorem.
	\end{proof}

	The above Bloch theorem is valid, but the corresponding Landau theorem is false without additional  boundedness assumptions. A Landau theorem would assert the existence of a radius $r_0 = r_0(k,M) > 0$ such that every $f \in \mathcal{H}_{k,M}^0$ is univalent in $\mathbb{D}_{r_0}$. The following example shows that no such result can hold.
	
	\begin{thm}\label{Theroem-2}
		There is no positive constant $r_0 = r_0(k,M)$ depending only on $k$ and $M$ such that every mapping in $\mathcal{H}_{k,M}^0$ is univalent in $\mathbb{D}_{r_0}$. In fact, the assertion fails already when $k = M = 0$.
	\end{thm}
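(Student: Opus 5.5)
Since $\mathcal{H}_{0,0}^0\subset\mathcal{H}_{k,M}^0$ for every $0\le k<1$, $M\ge 0$ (take $\omega\equiv 0$, $\psi\equiv 0$, so that $g'\equiv 0$), it suffices to treat the case $k=M=0$. In that case $g$ is constant, and the normalization $g(0)=0$ forces $g\equiv 0$. The class $\mathcal{H}_{0,0}^0$ then consists exactly of the analytic functions $f=h$ in $\mathbb{D}$ with $h(0)=0$, $h'(0)=1$, and $h'(z)\neq 0$ throughout $\mathbb{D}$. Indeed, the sense-preserving condition $|g'|<|h'|$ reduces to $0<|h'(z)|$. So the plan is to exhibit, for each prescribed $r_0\in(0,1)$, a locally univalent normalized analytic function that fails to be univalent in $\mathbb{D}_{r_0}$.

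The natural family is
\[
h_n(z)=\frac{e^{nz}-1}{n},\qquad n\in\mathbb{N}.
\]
First, $h_n(0)=0$ and $h_n'(z)=e^{nz}$, so $h_n'(0)=1$ and $h_n'$ never vanishes. Hence $f_n=h_n\in\mathcal{H}_{0,0}^0\subset\mathcal{H}_{k,M}^0$, with $\omega\equiv 0$ and $\psi\equiv0$, and trivially $\|\omega\|_\infty=0\le k$ and $\|\psi\|_\infty=0\le M$.

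Next, consider the points $z_1=i\pi/n$ and $z_2=-i\pi/n$. They satisfy $e^{nz_1}=e^{nz_2}=-1$, so $h_n(z_1)=h_n(z_2)=-2/n$, while $z_1\neq z_2$. Both points lie in $\mathbb{D}_{r}$ as soon as $\pi/n<r$. Given any candidate $r_0>0$, we choose $n>\pi/r_0$. Then $f_n$ is not univalent in $\mathbb{D}_{r_0}$, which contradicts the existence of a universal radius $r_0(k,M)$.

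There is no genuine obstacle in this argument. The only points needing care are the inclusion $\mathcal{H}_{0,0}^0\subset\mathcal{H}_{k,M}^0$ and the observation that strict sense-preservation holds, since $|g'|=0<|e^{nz}|$. It is also worth remarking that this example is consistent with the Bloch theorem just proved. Univalence near the origin can fail while a schlicht disk of definite radius still exists elsewhere in the image, which is why any Landau-type statement requires an additional assumption such as boundedness or a bounded Bloch seminorm.
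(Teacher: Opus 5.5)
Your proposal is correct and follows the paper's proof essentially verbatim: the same family $(e^{nz}-1)/n$, the same pair of points $\pm i\pi/n$ with common image $-2/n$, and the same choice $n>\pi/r_0$. Your explicit observation that $\mathcal{H}_{0,0}^0\subset\mathcal{H}_{k,M}^0$ for every admissible $k,M$ is a small but welcome clarification of a step the paper leaves implicit.
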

	
	\begin{proof}
		Consider the function $f_N(z) = \frac{e^{Nz} - 1}{N}$ for $N \in \mathbb{N}$. The function $f_N$ is analytic, which implies $g \equiv 0$. Consequently, $f_N$ belongs to the class $\mathcal{H}_{k,M}$ characterized by $\omega \equiv 0$ and $\psi \equiv 0$, that is, $k = M = 0$.
		
		Moreover, $f_N(0) = 0$ and $f'_N(0) = 1$, so $f_N \in \mathcal{H}_{0,0}^0$. Also, $f'_N(z) = e^{Nz} \neq 0$, so $f_N$ is locally univalent and sense-preserving.
		
		However,
		\[ f_N\left(i\frac{\pi}{N}\right) = \frac{e^{i\pi} - 1}{N} = -\frac{2}{N}, \quad \text{and} \quad f_N\left(-i\frac{\pi}{N}\right) = \frac{e^{-i\pi} - 1}{N} = -\frac{2}{N}. \]
		Thus $f_N$ is not injective on any disk $\mathbb{D}_r$ with $r > \frac{\pi}{N}$. Given any proposed $r_0 > 0$, choose $N > \frac{\pi}{r_0}$. Then $f_N$ is not univalent in $\mathbb{D}_{r_0}$. Hence no universal Landau radius depending only on $k,M$ exists.
	\end{proof}
	Although Theorem \ref{Theroem-2} shows that no universal Landau theorem can hold for the entire class $\mathcal{H}_{k,M}^0$ , we can estbalish Landau type theorem if suitable global bounds are imposed on the mapping. We first consider bounded harmonic mappings and then the harmonic mappings whose analytic part has bounded Bloch seminorm.
	\begin{thm}\label{Theorem-3}
		Let $f = h + \overline{g} \in \mathcal{H}_{k,M}^0$, and suppose for some $C>0,$ $|f(z)| \le C$ for $ z \in \mathbb{D}.$ If $k+M < 1$, then $f$ is univalent in the disk $\mathbb{D}_{r_C}$ with 
		$$r_C = \frac{3\pi(1-k-M)}{256C}.$$
		Moreover, $f(\mathbb{D}_{r_C})$ contains the disk centered at $0$ of radius $$R_C = \frac{3\pi(1-k-M)^2}{512C}.$$That is,$\{w \in \mathbb{C} : |w| < R_C\} \subset f(\mathbb{D}_{r_C}).$
	\end{thm}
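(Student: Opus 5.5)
The plan is to repeat the argument of the Bloch theorem, but with no rescaling. The global bound $|f|\le C$ together with Lemma \ref{Lemma-A} gives direct control of $h'$ and $g'$ near the origin. We then feed the resulting second-derivative bound into Lemma \ref{Lemma-1} and finish with Lemma \ref{Lemma-2}.

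\emph{Step 1 (first derivatives).} By Lemma \ref{Lemma-A} applied to $f=h+\overline{g}$,
\[
|h'(z)|+|g'(z)|\le \frac{4C}{\pi}\,\frac{1}{1-|z|^2},
\]
so for $|z|\le 1/2$ we get $|h'(z)|+|g'(z)|\le \frac{16C}{3\pi}$. Evaluating at $z=0$ gives $1=|h'(0)|\le 4C/\pi$, so $C\ge \pi/4$. We record this for later.

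\emph{Step 2 (second derivatives).} For $|z|\le 1/4$, the disk of radius $1/4$ about $z$ lies in $\mathbb{D}_{1/2}$. Cauchy's estimate on that disk gives $|h''(z)|\le 4\cdot\frac{16C}{3\pi}=\frac{64C}{3\pi}$, and likewise for $|g''|$. Bounding the two separately (the crude route) yields
\[
|h''(z)|+|g''(z)|\le L:=\frac{128C}{3\pi},\qquad |z|\le \tfrac14 .
\]
This choice of $L$ is exactly what reproduces the stated constant $r_C$. The sharper bound $64C/(3\pi)$ for the sum would only improve the result, so the weaker one suffices.

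\emph{Step 3 (lower norm at the origin).} From $g'(0)=\omega(0)h'(0)+\psi(0)$ and $h'(0)=1$ we have $|g'(0)|\le k+M$. Hence $\lambda_f(0)=|h'(0)|-|g'(0)|\ge 1-k-M=:\lambda>0$.

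\emph{Step 4 (Lemma \ref{Lemma-1}).} Apply Lemma \ref{Lemma-1} with $F=f$, the disk $\mathbb{D}_{1/4}$ in place of $\mathbb{D}_R$, and
\[
s=\frac{\lambda}{2L}=\frac{3\pi\lambda}{256C}=r_C.
\]
This is admissible because $s\le 1/4$: indeed $C\ge\pi/4$ gives $s\le 3\lambda/64<1/4$. Since $s<\lambda/L$, $f$ is univalent on $\mathbb{D}_{r_C}$, and $|f(z_1)-f(z_2)|\ge\frac{\lambda}{2}|z_1-z_2|$ there.

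\emph{Step 5 (Lemma \ref{Lemma-2}).} We need injectivity on the closed disk $\overline{\mathbb{D}}_{r_C}$. Since $r_C<\lambda/L$ strictly and $r_C<1/4$, choose $s'$ with $r_C<s'<\min\{\lambda/L,1/4\}$ and apply Lemma \ref{Lemma-1} on $\mathbb{D}_{s'}$; this gives univalence on a neighborhood of $\overline{\mathbb{D}}_{r_C}$. Letting $z_2\to0$ and using continuity, the estimate of Step 4 gives, for $|z|=r_C$,
\[
|f(z)-f(0)|\ge\frac{\lambda r_C}{2}=\frac{3\pi\lambda^2}{512C}=R_C .
\]
Lemma \ref{Lemma-2} then yields $\mathbb{D}(f(0),R_C)\subset f(\mathbb{D}_{r_C})$, and $f(0)=0$ by normalization, which is the claim.

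The main obstacle is bookkeeping rather than anything conceptual. One must pick the Cauchy radius and the constant $L$ consistently with the stated $r_C$, and justify that $r_C$ lies inside the disk where the second-derivative bound holds. That second point is where the observation $C\ge\pi/4$, obtained from Lemma \ref{Lemma-A} at the origin, is needed.
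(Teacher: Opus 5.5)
Your proposal is correct and follows essentially the same route as the paper's proof: Colonna's estimate (Lemma \ref{Lemma-A}), Cauchy estimates on radius-$1/4$ disks giving $L=128C/(3\pi)$, the bound $\lambda_f(0)\ge 1-k-M$, and then Lemma \ref{Lemma-1} with $s=\lambda/(2L)$ followed by Lemma \ref{Lemma-2}. Your two extra checks fill in details the paper leaves implicit: the observation $C\ge\pi/4$ forces $r_C\le 3\lambda/64<1/4$, so Lemma \ref{Lemma-1} legitimately applies on $\mathbb{D}_{1/4}$, and you extend univalence to a neighborhood of $\overline{\mathbb{D}}_{r_C}$, which Lemma \ref{Lemma-2} requires.
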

	\begin{proof}
		Since $|f(z)| \le C$, by Lemma \ref{Lemma-A}, we have 
		$$|h'(z)| + |g'(z)| \le \frac{4C}{\pi} \frac{1}{1-|z|^2}.$$
		In particular, for $|z| \le 1/2$, we have
		$$|h'(z)| + |g'(z)| \le \frac{4C}{\pi} \cdot \frac{1}{1-1/4} = \frac{16C}{3\pi}.$$Hence, we can say that
		 $$|h'(z)| \le \frac{16C}{3\pi}, \quad |g'(z)| \le \frac{16C}{3\pi}, \quad |z| \le \frac{1}{2}.$$
		Now take a point $z$ such that $|z| \le 1/4$. The disk $D(z, 1/4)$ lies entirely inside $\mathbb{D}_{1/2}$. Applying Cauchy's estimate to the analytic functions $h'$ and $g'$ over this disk of radius $1/4$, we obtain
		$$|h''(z)| \le 4 \cdot \frac{16C}{3\pi} = \frac{64C}{3\pi},$$
		and similarly,
		$$|g''(z)| \le \frac{64C}{3\pi}.$$
		Therefore, 
		$$ |h''(z)| + |g''(z)| \le \frac{128C}{3\pi}, \quad |z| \le \frac{1}{4}.$$
		Since $h'(0) = 1$, the equation $g'(z) = \omega(z)h'(z) + \psi(z)$ evaluated at $z = 0$ gives 
		$$g'(0) = \omega(0)h'(0) + \psi(0) = \omega(0) + \psi(0).$$
		Because $\|\omega\|_\infty \le k$ and $\|\psi\|_\infty \le M$, this implies that 
		$$|g'(0)| \le k + M.$$
		Thus, setting $\lambda := 1 - k - M > 0$, we have 
		$$\lambda_f(0) = |h'(0)| - |g'(0)| \ge 1 - k - M = \lambda.$$
		Now, applying Lemma \ref{Lemma-1} for  $L= \frac{128C}{3\pi}$ and$$r_C := \frac{\lambda}{2L} = \frac{1-k-M}{2} \cdot \frac{3\pi}{128C} = \frac{3\pi(1-k-M)}{256C},$$ 
		for any $z_1, z_2 \in \mathbb{D}_{r_C}$, we have  
		$$|f(z_1) - f(z_2)| \ge (\lambda - Lr_C)|z_1 - z_2| = \left(\lambda - \frac{\lambda}{2}\right)|z_1 - z_2| = \frac{\lambda}{2}|z_1 - z_2|.$$
		Because this lower bound is strictly positive for $z_1 \neq z_2$, $f$ is univalent on $\mathbb{D}_{r_C}$.
		
		\vspace{1mm}
		Next, by setting $z_2 = 0$ and using $f(0) = 0$, we get
		$$|f(z)| \ge \frac{\lambda}{2}|z|, \quad |z| \le r_C.$$Thus, on the boundary $|z| = r_C$, we have 
		$$|f(z)| \ge \frac{\lambda}{2} r_C = \frac{\lambda}{2} \cdot \frac{3\pi\lambda}{256C} = \frac{3\pi\lambda^2}{512C}.$$Since $f$ is univalent on $\mathbb{D}_{r_C}$ and continuous on the boundary, Lemma \ref{Lemma-2} guarantees that the image $f(\mathbb{D}_{r_C})$ contains the open disk centered at $0$ of radius, where 
		$$R_C = \frac{3\pi\lambda^2}{512C} = \frac{3\pi(1-k-M)^2}{512C}.$$
		Therefore $$\{w \in \mathbb{C} : |w| < R_C\} \subset f(\mathbb{D}_{r_C}).$$This completes the proof. 
	\end{proof}

	The obstruction in Theorem \ref{Theroem-2} also comes from the absence of any global control on $h'$. A natural additional assumption is boundedness of the analytic Bloch seminorm:
	\[ \beta(h) = \sup_{z\in\mathbb{D}} (1 - |z|^2)|h'(z)|. \]
	Under this assumption, one can obtain a new version of Landau theorem.
	
	\begin{thm}
		Let $f = h + \overline{g} \in \mathcal{H}_{k,M}^0$ with $k + M < 1$. Assume additionally that
		\[ \beta(h) = \sup_{z\in\mathbb{D}} (1 - |z|^2)|h'(z)| \le A \]
		for some $A \ge 1$. Set $\lambda = 1 - k - M$. Then $f$ is univalent in the disk
		\[ \mathbb{D}_{r_A}, \quad r_A = \frac{3(1 - k - M)}{64A}. \]
		Moreover, $f(\mathbb{D}_{r_A})$ contains a disk centered at $0$ of radius at least
		\[ R_{k,M,A} = \frac{3(1 - k - M)^2}{128A}. \]
	\end{thm}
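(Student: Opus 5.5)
The argument follows the proof of Theorem \ref{Theorem-3}, with the Bloch seminorm bound replacing Lemma \ref{Lemma-A} as the source of derivative control. From $\beta(h)\le A$ we get
\[
|h'(z)|\le \frac{A}{1-|z|^2}, \qquad z\in\mathbb{D},
\]
so $|h'(z)|\le 4A/3$ for $|z|\le 1/2$. Since $f$ is sense-preserving, $|g'(z)|<|h'(z)|$, and hence also $|g'(z)|\le 4A/3$ on $\mathbb{D}_{1/2}$. This is the same bound obtained in the proof of the Bloch theorem, with the constant $1$ replaced by $A$.

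Next we pass to second derivatives. For $|z|\le 1/4$, the disk $D(z,1/4)$ lies in $\mathbb{D}_{1/2}$. Cauchy's estimate on this disk gives $|h''(z)|\le 16A/3$ and $|g''(z)|\le 16A/3$, so
\[
|h''(z)|+|g''(z)|\le L:=\frac{32A}{3},\qquad |z|\le \frac14 .
\]
At the origin, $h'(0)=1$ gives $g'(0)=\omega(0)+\psi(0)$, so $|g'(0)|\le k+M$. Therefore $\lambda_f(0)=1-|g'(0)|\ge\lambda=1-k-M>0$.

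We then apply Lemma \ref{Lemma-1} with $R=1/4$, this $L$, and
\[
s=r_A=\frac{\lambda}{2L}=\frac{3\lambda}{64A}.
\]
We must check $r_A\le 1/4$, which holds because $\lambda<1$ and $A\ge 1$ give $r_A\le 3/64$. Lemma \ref{Lemma-1} then yields $|f(z_1)-f(z_2)|\ge\frac{\lambda}{2}|z_1-z_2|$ on $\mathbb{D}_{r_A}$, so $f$ is univalent there. Taking $z_2=0$ and using $f(0)=0$, we get $|f(z)|\ge \lambda r_A/2$ on $|z|=r_A$. The bound extends to the closed disk by continuity, and injectivity on $\overline{\mathbb{D}}_{r}$ for every $r<r_A$ passes to the limit. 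Lemma \ref{Lemma-2}, applied on $\overline{\mathbb{D}}_r$ for $r<r_A$ and letting $r\to r_A$, then gives the disk centered at $0$ of radius
\[
\frac{\lambda r_A}{2}=\frac{3\lambda^2}{128A}=R_{k,M,A}.
\]

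No step is a real obstacle; the computation is routine. The one point needing care is the application of Lemma \ref{Lemma-2} on the closed disk, since Lemma \ref{Lemma-1} gives univalence only on the open disk $\mathbb{D}_{r_A}$. Working on $\overline{\mathbb{D}}_r$ for $r<r_A$ and then letting $r\to r_A$ handles this, as described above.
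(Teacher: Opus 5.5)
Your proposal is correct and follows the paper's route: the Bloch seminorm bound together with sense-preservation gives $|h'|,|g'|\le 4A/3$ on $\mathbb{D}_{1/2}$. The remaining steps are the argument of Theorem \ref{Theorem-3} (Cauchy estimate, Lemma \ref{Lemma-1} with $L=32A/3$, then Lemma \ref{Lemma-2}), which the paper omits and you write out, adding the check $r_A\le 1/4$ and applying Lemma \ref{Lemma-2} on $\overline{\mathbb{D}}_r$ for $r<r_A$ to handle the closed-disk hypothesis.
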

	
	\begin{proof}
		Since $\beta(h) \le A$, we have 
		$$|h'(z)| \le \frac{A}{1 - |z|^2}.$$
		Since $f$ is sense-preserving, we have $|g'(z)| < |h'(z)|$, and hence, $$|g'(z)| \le \frac{A}{1 - |z|^2}.$$
        Thus for $|z|\le 1/2$, we have 
        $$ |h'(z)|\le \frac{4A}{3}, \quad |g'(z)|\le \frac{4A}{3}.$$
        The remaining arguments are identical to those in the proof of Theorem \ref{Theorem-3} and are therefore omitted.
	\end{proof}
	
	\vspace{1.5mm}
	
	\noindent\textbf{Compliance of Ethical Standards:}\\
	\noindent\textbf{Conflict of interest.} The authors declare that there is no conflict  of interest regarding the publication of this paper.
	\vspace{1.5mm}
	
	\noindent\textbf{Data availability statement.}  Data sharing is not applicable to this article as no datasets were generated or analyzed during the current study.\vspace{1.5mm}
	
	\noindent\textbf{Authors contributions.} Both the authors have made equal contributions in reading, writing, and preparing the manuscript.


\begin{thebibliography}{99}
		\bibitem{Abdulhadi-Muhanna-2008}
		{\sc Z. Abdulhadi } and {\sc Y. Abu Muhanna},
		Landau's theorem for biharmonic mappings,
		{\it J. Math. Anal. Appl.}
		{\bf 338}(1) (2008), 705--709.
		
		\bibitem{Ahlfors-Grunsky-1937}
		{\sc L. Ahlfors} and {\sc H. Grunsky},
		Uber die Blochsche Konstante,
		{\it Math. Z.}
		{\bf 42} (1937), 671--673.
		
		\bibitem{Allu-Kumar-2024}
		{\sc V. Allu } and {\sc R. Kumar},
		Landau--Bloch type theorem for elliptic and quasiregular harmonic mappings,
		{\it J. Math. Anal. Appl.}
		{\bf 535} (2024), 128215.
		
		\bibitem{Allu-Kumar-SCV}
		{\sc V. Allu } and {\sc R. Kumar},
		The Landau--Bloch type theorems for certain class of holomorphic and pluriharmonic mappings in $\mathbb{C}^n$, {\it Ann. Fenn. Math.} {\bf 50}(1) (2025), 215--219.
		
		\bibitem{Allu-Kumar-Alpha}
		{\sc V. Allu } and {\sc R. Kumar},
		Landau type theorem for generalized harmonic mappings,
		{\it Complex Var. Elliptic Equ.}
		(2026), DOI:10.1080/17476933.2026.2682927.
		
		\bibitem{Bloch-1925}
		{\sc A. Bloch},
		Les théorèmes de M. Valiron sur les fonctions entières et la théorie de l'uniformisation,
		{\it Ann. Fac. Sci. Univ. Toulouse}
		{\bf 17} (1925), 1--22.
		
		\bibitem{Bonk-1990}
		{\sc M. Bonk},
		On Bloch's constant,
		{\it Proc. Amer. Math. Soc.}
		{\bf 110}(4) (1990), 889--894.
		
		\bibitem{Chen-Gauthier-1996}
		{\sc S. Chen} and {\sc P. Gauthier},
		The lower bound of Bloch's constant,
		{\it Proc. Amer. Math. Soc.}
		{\bf 124}(7) (1996), 2155--2158.
		
		\bibitem{Chen-Gauthier-Hengartner-2000}
		{\sc S. Chen, P. Gauthier} and {\sc W. Hengartner},
		Bloch constants for planar harmonic mappings,
		{\it Proc. Amer. Math. Soc.}
		{\bf 128}(11) (2000), 3231--3240.
		
		\bibitem{Chen-Ponnusamy-2020}
		{\sc S. Chen} and {\sc S. Ponnusamy},
		Characterizations of elliptic mappings,
		{\it Complex Var. Elliptic Equ.}
		{\bf 65}(11) (2020), 1911--1926.
		
		\bibitem{Clunie-SheilSmall-1984}
		{\sc J. Clunie} and {\sc T. Sheil-Small},
		Harmonic univalent functions,
		{\it Ann. Acad. Sci. Fenn. Ser. A I Math.}
		{\bf 9} (1984), 3--25.
		
		\bibitem{Colonna-1989}
		{\sc F. Colonna},
		The Bloch constant of bounded harmonic mappings,
		{\it Indiana Univ. Math. J.}
		{\bf 38}(4) (1989), 829--840.
		
		
		\bibitem{Grigoryan-2006}
		{\sc A. Grigoryan},
		Landau and Bloch theorems for harmonic mappings,
		{\it Complex Var. Elliptic Equ.}
		{\bf 51}(1) (2006), 81--87.
		
		\bibitem{Huang-2008}
		{\sc X. Z. Huang},
		Estimates on Bloch constants for planar harmonic mappings,
		{\it J. Math. Anal. Appl.}
		{\bf 337}(2) (2008), 880--887.
		
		\bibitem{Landau-1926}
		{\sc E. Landau},
		Der Picard-Schottkysche Satz und die Blochsche Konstanten,
		{\it Sitzungsber. Preuss. Akad. Wiss. Berlin Phys.-Math. Kl.}
		(1926), 467--474.
		
		\bibitem{Landau-1929}
		{\sc E. Landau},
		Darstellung und Begründung einiger neuerer Ergebnisse der Funktionentheorie,
		Springer, Berlin, 1929.
		
		\bibitem{Liu-2008}
		{\sc M. S. Liu},
		Landau's theorems for biharmonic mappings,
		{\it Complex Var. Elliptic Equ.}
		{\bf 53}(9) (2008), 843--855.
		
%
		
		\bibitem{Liu-Chen-2018}
		{\sc M. S. Liu} and {\sc H. H. Chen},
		The Landau--Bloch type theorems for planar harmonic mappings with bounded dilation,
		{\it J. Math. Anal. Appl.}
		{\bf 468}(2) (2018), 1066--1081.
		
		\bibitem{Liu-Ponnusamy-2024}
		{\sc M. S. Liu} and {\sc S. Ponnusamy},
		Landau-type theorems for certain bounded bi-analytic functions and biharmonic mappings,
		{\it Canad. Math. Bull.}
		{\bf 67}(1) (2024), 152--165.
		
		   \bibitem{Nirenberg-1953} {\sc L. Nirenberg}, On nonlinear elliptic partial differential equations and H\"{o}lder continuity, {\it  Commun. Pure. Appl. Math.} {\bf 6} (1953), 103--156.
		
		\bibitem{Zhu-2015}
		{\sc J. F. Zhu},
		Landau theorem for planar harmonic mappings,
		{\it Complex Anal. Oper. Theory}
		{\bf 9}(8) (2015), 1819--1826.
	\end{thebibliography}
\end{document}